\theoremstyle{plain}
\newtheorem{theorem}{Theorem}[section]
\newtheorem{lemma}[theorem]{Lemma}
\theoremstyle{definition}
\newcommand{\norm}[1]{\left\lVert#1\right\rVert}
\title[Multi-dimensional Kronecker Sequences and Gap Lengths]{Multi-dimensional Kronecker Sequences with a Small Number of Gap Lengths}
\date{\today}
\author{Christian Wei\ss{}}
\keywords{Kronecker Sequences, Nearest Neighbor Distance, Continued Fractions}
\subjclass{11K31, 11A55, 11K36, 11J71}
\begin{document}

\maketitle

\begin{abstract} Recently, generalizations of the classical Three Gap Theorem to higher dimensions attracted a lot of attention. In particular, upper bounds for the number of nearest neighbor distances have been established for the Euclidean and the maximum metric. It was proved that a generic multi-dimensional Kronecker attains the maximal possible number of different gap lengths for every sub-exponential subsequence. We mirror this result in dimension $d \in \left\{ 2, 3 \right\}$ by constructing Kronecker sequences which have a surprisingly low number of different nearest neighbor distances for infinitely $N \in \mathbb{N}$. Our proof relies on simple arguments from the theory of continued fractions. 
\end{abstract}
\section{Introduction}
It is usually a great challenge to motivate topics from current mathematical research to a wider audience. Happily, this can be relatively easily done for finite gap properties of sequences and holds particularly true in the one-dimensional case: mark the north pole of a circle with red color, let this point rotate by an angle $\alpha \in \mathbb{R}$ and again mark the point, where the north pole lands, red. Let us repeat this procedure $N$ times. Then a remarkable phenomenon occurs. There are always at most three distinct distances between pairs of points in adjacent positions around the circle, compare Figure~\ref{fig:Three_Gap}. This property was conjectured by Steinhaus and first proved by S\'os in \cite{Sos58} and is since then known as Three Gap Theorem. It is also very nicely motivated in \cite{MS17}. In a formal sense, the corresponding sequence is defined as $(\left\{ n\alpha \right\})_{n \in \mathbb{N}}$, where $\left\{ \alpha \right\} := \alpha - \lfloor \alpha \rfloor$ denotes the fractional part of $\alpha$, and called a Kronecker sequence.
 \begin{center}
	\begin{tikzpicture}[scale=0.75]
	\def\ang{137.508};
	\def\scal{2};
	\draw [black,thick,dotted,domain=0:52.524] plot ({\scal*sin(\x)}, {\scal*cos(\x)});
	\draw [black,thick,dotted,domain=52.524:105.048] plot ({\scal*sin(\x)}, {\scal*cos(\x)});
	\draw [blue,thick,domain=105.048:137.508] plot ({\scal*sin(\x)}, {\scal*cos(\x)});
	\draw [black,thick,dotted,domain=137.508:190.032] plot ({\scal*sin(\x)}, {\scal*cos(\x)});
	\draw [green,thick,dashed,domain=190.032:275.016] plot ({\scal*sin(\x)}, {\scal*cos(\x)});
	\draw [black,thick,dotted,domain=275.016:327.54] plot ({\scal*sin(\x)}, {\scal*cos(\x)});
	\draw [blue,thick,domain=327.54:360] plot ({\scal*sin(\x)}, {\scal*cos(\x)});
	\draw [red,fill=red] (\scal*0,\scal*1) circle (.5ex) node[above] {$0$};
	\draw [red,fill=red] ({\scal*sin(52.524)},{\scal*cos(52.524)}) circle (.5ex) node[above] {$3$};
	\draw [red,fill=red] ({\scal*sin(105.048)},{\scal*cos(105.048)}) circle (.5ex) node[right] {$6$};
	\draw [red,fill=red] ({\scal*sin(137.508)},{\scal*cos(137.508)}) circle (.5ex) node[below] {$1$};
	\draw [red,fill=red] ({\scal*sin(190.032)},{\scal*cos(190.032)}) circle (.5ex) node[below] {$4$};
	\draw [red,fill=red] ({\scal*sin(275.016)},{\scal*cos(275.016)}) circle (.5ex) node[left] {$2$};
	\draw [red,fill=red] ({\scal*sin(327.54)},{\scal*cos(327.54)}) circle (.5ex) node[above] {$5$};
	\end{tikzpicture}\\[6pt]
	Figure 1. Illustration of Three Gap Theorem for $N=6$ and rotation by $\alpha = \pi(3-\sqrt{5})$, the golden angle, i.e. $z=(3-\sqrt{5})/2$. \label{fig:Three_Gap}
\end{center}
After the first proof by S\'os, plenty of other proofs of Three Gap Theorem have been found, see e.g. \cite{MS17,Tah17,Wei20}. Besides the nice geometric property of its gap structure, sequences $(\left\{ n\alpha \right\})_{n \in \mathbb{N}}$ are also important for uniform distribution theory because there is a classical class of examples of low-discrepancy sequences among them, see e.g. \cite{Nie92}. Another consequence of the finite gap property is that it prevents the pair correlation statistics of Kronecker sequences from being Poissonian, see \cite{LS20}.\\[12pt]
Three Gap Theorem has been generalized in numerous ways. A comprehensive summary with an extensive list of corresponding literature is given in \cite{HM20} and we refer the interested reader to this paper and references therein for more details. Here, we will concentrate on one specific type of generalization and consider multi-dimensional Kronecker sequences: Let $\mathcal{L}$ be a unimodular lattice in $\mathbb{R}^d$ and consider the $d$-dimensional torus $\mathds{T}_d = \mathbb{R}^d / \mathcal{L}$. For $\alpha \in \mathbb{R}^d$ the $d$-dimensional Kronecker sequence is defined by
$$S_N:=S_N(\alpha,\mathcal{L}):= \left\{ (z_n)_{n \in \mathbb{N}} := n\alpha + \mathcal{L} \right\} \subset \mathds{T}_d.$$
If $\alpha \in \mathbb{Q}^d$, then the sequence $(z_n)_{n \in \mathbb{N}}$ is periodic and thus the number of distances between elements in $S_N(\alpha,\mathcal{L})$ is universally bounded. Let $\norm{\cdot}_q$ denote a $L_q$-norm on the $d$-dimensional torus, where $1 \leq q \leq \infty$ and let their implied metrics be $d_q(\cdot,\cdot)$. The distance $\delta_{n,N}^1:=d_q(z_n,nn_1(z_n))$ is the distance of $z_n \in S_N$ to its closest neighbor $nn_1(z_n) \in S_N \setminus \{ z_n \}$ in terms of the $L_q$-metric. Finally, the number $g_N(\alpha,\mathcal{L},\norm{\cdot})$ denotes the number of distinct nearest neighbors of the finite sequence $S_N$ with respect to the norm $\norm{\cdot}$. For the case of the Euclidean metric ($q=2$), the best known bound is due to \cite{HM20} improving results from \cite{BS08}: for any $\alpha, \mathcal{L}$ and $N$ the inequality
\begin{align} \label{ineq:gn2}
g_N(\alpha,\mathcal{L},\norm{\cdot}_2) \leq \begin{cases} 3 & \text{if} \ d = 1 \\ 5 & \text{if} \ d = 2\\ \sigma_d +1 & \textit{if} \ d \geq 3\end{cases}
\end{align}
holds, where $\sigma_d$ is the kissing number, i.e. the maximum number of non-overlapping spheres of radius one in $\mathbb{R}^d$ which can be arranged such that they touch the unit sphere in exactly one point. Let us denote by $g(d,2)$ the maximal possible value of any $g_N(\alpha,\mathcal{L},\norm{\cdot}_2)$ in dimension $d$. The bounds for dimension $d \in \left\{ 1,2\right\}$ in \eqref{ineq:gn2} are known to be sharp. Moreover, in dimension $d=2$, the upper bound can also be calculated for the more general case that only neighbors with distances in directions $\mathcal{D}$, where $\mathcal{D} \subset \mathbb{S}^1_1$ is an half-open interval, are considered. The upper bound for $g_N(\mathcal{D},\alpha,\mathcal{L},\norm{\cdot})$ then depends on the arclength of $\mathcal{D}$, see \cite{HM20}, Theorem 4.\\[12pt]
In practice, it turns out surprisingly hard even in dimension $2$ to find explicit examples of combinations $\alpha,\mathcal{L},N$ with $g_N(\alpha,\mathcal{L},\norm{\cdot}_2) = g(2,2) = 5$. Nonetheless, it is theoretically known from \cite{HM20} that reaching the (unknown) upper bound $g(d,2)$ is not at all a singular event bur rather the general case.
\begin{theorem}[Haynes, Marklof \cite{HM20}, Theorem 2] \label{thm:hm:lower_bound} Let $\mathcal{L}, \mathcal{L}_0$ be unimodular lattices. There is $P \subset \mathbb{R}^d$ of full Lebesgue measure, such that for every $\alpha \in P, a_0 \in \mathbb{R}^d$ and for every sub-exponential sequence $(N_i)_{i \in \mathbb{N}}$ we have
$$\limsup_{i \to \infty} g_{N_i}(\alpha,\mathcal{L}) \geq \sup_{N \in \mathbb{N}} g_N(\alpha_0,\mathcal{L}_0).$$
\end{theorem}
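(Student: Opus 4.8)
We outline a proof along the lines of homogeneous dynamics (this is the approach of \cite{HM20}).

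\emph{Encoding the gap count by a lattice.} The plan is to recast the discrete count $g_N(\alpha,\mathcal L)$, after rescaling, as the value of a lower‑semicontinuous function on the space $X:=\mathrm{SL}_{d+1}(\mathbb R)/\mathrm{SL}_{d+1}(\mathbb Z)$ of unimodular lattices in $\mathbb R^{d+1}$, evaluated along the orbit of the diagonal flow through the ``Kronecker lattice'' attached to $(\alpha,\mathcal L)$. Fix $q=2$ for concreteness (the argument is identical for any norm for which $g_N$ is uniformly bounded, e.g.\ $q=\infty$). Write $\mathcal L=M\mathbb Z^d$ with $M\in\mathrm{SL}_d(\mathbb R)$, let $g_t=\mathrm{diag}(e^{t},\dots,e^{t},e^{-dt})$, let $u(\beta)$ be the unipotent matrix with block form $\bigl(\begin{smallmatrix}I_d&\beta\\0&1\end{smallmatrix}\bigr)$, let $h_M=\mathrm{diag}(M,1)$, and set $\Lambda(\alpha,\mathcal L):=h_Mu(\alpha)\mathbb Z^{d+1}=\{(k\alpha-\ell,k):\ell\in\mathcal L,\ k\in\mathbb Z\}\in X$. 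A direct computation gives, for $1\le n\le N$ with $n/N$ away from finitely many ``jump'' values and $N$ large, $N^{1/d}\delta^1_{n,N}=\rho\bigl(g_{(\log N)/d}\Lambda(\alpha,\mathcal L),\,n/N\bigr)$, where for $L\in X$ and $y\in(0,1]$
\[
\rho(L,y):=\min\bigl\{\norm{w}:(w,s)\in L,\ s\ne0,\ s\in[y-1,\,y)\bigr\}.
\]
Hence $g_N(\alpha,\mathcal L)$ equals, up to a harmless error coming only from the exact endpoints of the sliding height window and from the sampling of $y$ along $\{1/N,\dots,1\}$, the number of distinct values of $y\mapsto\rho\bigl(g_{(\log N)/d}\Lambda(\alpha,\mathcal L),y\bigr)$. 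Since $\rho(L,\cdot)$ is a step function whose finitely many jumps depend continuously on $L$, and since $\rho(L,y)$ is a minimum of finitely many continuous functions of $L$, the quantity $\Phi(L):=\#\{\rho(L,y):y\in(0,1]\}$ is lower semicontinuous on $X$; moreover one gets the comparison $g_N(\alpha,\mathcal L)\ge\Phi(L)$ as soon as $g_{(\log N)/d}\Lambda(\alpha,\mathcal L)$ is close enough to $L$ and $N$ is large enough in terms of the spacing of the jumps of $\rho(L,\cdot)$.

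\emph{Pinning down the target.} By \eqref{ineq:gn2} the set $\{g_N(\alpha_0,\mathcal L_0):N\in\mathbb N\}$ is a bounded set of non-negative integers, hence attains its supremum: there is $N_0$ with $g_{N_0}(\alpha_0,\mathcal L_0)=G_0:=\sup_{N}g_N(\alpha_0,\mathcal L_0)$. Put $L_0^\ast:=g_{(\log N_0)/d}\Lambda(\alpha_0,\mathcal L_0)\in X$; by the previous paragraph $\Phi(L_0^\ast)\ge G_0$, so by lower semicontinuity there is an open ball $U=B(L_0^\ast,r)\subset X$ with $\Phi\ge G_0$ on $U$, together with a threshold $N_\ast$ such that $g_N(\alpha,\mathcal L)\ge G_0$ whenever $N\ge N_\ast$ and $g_{(\log N)/d}\Lambda(\alpha,\mathcal L)\in U$.

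\emph{The dynamical input, and catching the orbit.} Because $g_tu(\beta)g_{-t}=u(e^{(d+1)t}\beta)$ and $g_t$ commutes with $h_M$, the set $\{h_Mu(\beta)\mathbb Z^{d+1}:\beta\in\mathbb R^d\}$ is a piece of an expanding horosphere for $g_t$, and by the equidistribution of expanding horospherical translates — a consequence of the (exponential) mixing of the $g_t$-flow on $X$ — together with a standard Borel--Cantelli argument, the set
\[
P:=\bigl\{\alpha\in\mathbb R^d:\ \{g_th_Mu(\alpha)\mathbb Z^{d+1}:t\ge0\}\text{ equidistributes in }X\text{ w.r.t.\ Haar measure}\bigr\}
\]
has full Lebesgue measure; since it is defined without reference to $\alpha_0,\mathcal L_0$, one and the same $P$ works for all of them. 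Fix $\alpha\in P$. Since $\mu(U)>0$, the orbit lies in $B(L_0^\ast,r/2)$ for a set of times of positive density, and because $t\mapsto g_th_Mu(\alpha)\mathbb Z^{d+1}$ moves at constant speed for a fixed left-invariant metric on $X$, whenever the orbit is in $B(L_0^\ast,r/2)$ at some time $t^\ast$ it stays in $B(L_0^\ast,r)=U$ throughout an interval of fixed positive length around $t^\ast$. Thus there are arbitrarily far-out time intervals, of a fixed positive length, along which the orbit remains inside $U$. Writing $t_i:=\tfrac1d\log N_i$, the sub-exponential growth of $(N_i)$ is exactly what guarantees that the $t_i$ eventually enter each of these intervals — it prevents the rescaled indices from permanently skipping over intervals of a fixed positive length. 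Choosing for each such interval an index $i$ with $t_i$ inside it and $N_i\ge N_\ast$, we obtain $g_{N_i}(\alpha,\mathcal L)\ge G_0$ for infinitely many $i$, hence $\limsup_{i\to\infty}g_{N_i}(\alpha,\mathcal L)\ge G_0=\sup_{N}g_N(\alpha_0,\mathcal L_0)$.

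I expect the main obstacle to be the dynamical input: the almost-everywhere equidistribution of the diagonal-flow orbit through a horospherical point, which is where homogeneous dynamics (mixing of $g_t$, a maximal/Borel--Cantelli argument, and for $d=1$ Dani--Smillie-type unique ergodicity) genuinely enters. The remaining, more bookkeeping-type difficulties are making the dictionary of the first step between the combinatorial count $g_N$ and the lattice function $\Phi$ precise and uniform — in particular taming the sliding height window and the sampling $\{k/N\}$ and verifying lower semicontinuity at the jump-free parameters — and carrying out the last step strictly from the adopted definition of a sub-exponential sequence, since the extent to which the indices $t_i$ can skip over short time intervals depends on exactly how that notion is quantified.
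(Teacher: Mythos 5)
Note first that the paper offers no proof of this statement to compare against: Theorem~\ref{thm:hm:lower_bound} is imported verbatim from Haynes--Marklof \cite{HM20} and used as a black box. Measured against the original argument, your outline does follow the right general scheme (rescaled nearest-neighbour distances read off from $g_{(\log N)/d}$-translates of a lattice attached to $(\alpha,\mathcal L)$, lower semicontinuity of the induced gap-counting function on the space of lattices, attainment of the supremum via the uniform bound \eqref{ineq:gn2}, and an equidistribution input for the diagonal flow applied to points of an expanding horosphere); for general $a_0$ one must in addition work with affine lattices, a feature your sketch silently drops.

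There is, however, a genuine gap at the decisive step, exactly the one you flag at the end. From continuous-time equidistribution of $t\mapsto g_th_Mu(\alpha)\mathbb Z^{d+1}$ you obtain a positive-density set of good times organized into intervals of a fixed length $2\tau>0$, and you then assert that sub-exponentiality of $(N_i)$ forces $t_i=\tfrac1d\log N_i$ to enter these intervals infinitely often. Under the reading of ``sub-exponential'' needed for the theorem's intended strength, namely $(\log N_i)/i\to 0$, this is false: it does not give $N_{i+1}/N_i\to 1$, so the gaps $t_{i+1}-t_i$ need not eventually fall below $2\tau$ (e.g.\ $N_i=2^{\lceil\sqrt i\,\rceil}$ is sub-exponential with ratio $2$ infinitely often); worse, since an interval of length $\ell$ in $t$ at scale $T$ contains roughly $\ell d e^{dT}$ integers $N$, one can pad a strictly increasing sub-exponential sequence with exponentially many terms inside the complement of the good set and jump over \emph{every} sufficiently late good interval, whenever that complement is unbounded --- which continuous-time equidistribution cannot exclude, as the neighbourhood $U$ of $L_0^\ast$ will typically have small measure. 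So the conclusion for \emph{every} sub-exponential sequence, with $P$ chosen independently of the sequence, cannot be extracted from the continuous statement plus this skipping argument; this is precisely where the real work in \cite{HM20} lies, namely an equidistribution/recurrence statement for the discretely sampled points $g_{t_i}h_Mu(\alpha)\mathbb Z^{d+1}$ along arbitrary sub-exponential sequences, valid for a full-measure set of $\alpha$ fixed in advance, obtained by effective (mixing/Borel--Cantelli type) methods rather than as a soft corollary. Your argument would be fine if ``sub-exponential'' were defined as $N_{i+1}/N_i\to1$, but that yields a strictly weaker theorem; the remaining bookkeeping steps of your dictionary between $g_N$ and the lattice function are consistent with the Haynes--Marklof scheme, though each requires care to make precise.
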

In particular, the maximal possible value $g(d,2)$ can be realized for an infinite sequence of $(N_{i_j})_{j \in \mathbb{N}}$ although the theorem does not give an explicit way to find the set $P$. Moreover, Haynes and Marklof conjectured that in dimension $d=3$ even $g_N(3,2) \leq 7$ holds and gave an explicit example with $g_N(\alpha,\mathcal{L},\norm{\cdot}_2) = 7$. Similar results were obtained in \cite{HR20} for the $L_\infty$-norm\\[12pt]
While Theorem~\ref{thm:hm:lower_bound} describes the generic case, we construct in this paper sequences in dimension $d \in \left\{2,3\right\}$ which have a surprisingly small number of nearest neighbor distances, namely satisfying $g_N(\alpha,\mathcal{L},\norm{\cdot}_q) = 1$ for infinitely many $N \in \mathbb{N}$ and \textit{all} $1 \leq q \leq \infty$ simultaneously. The construction relies on the continued fraction expansion of $\alpha_1,\alpha_2$ and $\alpha_3$ and makes sure that the denominators $q_{i,j}$ of the convergents of all $\alpha_j$ are equal for infinitely $i \in \mathbb{N}$. By that we obtain our theorem. For the sake of simplicity and readability we omit here the exact number theoretical properties (because they are in our eyes not relevant at this stage) and move the details to Lemma~\ref{lem:cond}.
\begin{theorem} \label{thm:construction:3d} Let $[0;a_1^1,a_2^1,\ldots]$ be the continued fraction of $\alpha_1 \in (0,1)$ and let $(q_k^1)_{k \in \mathbb{N}}$ denote the denominators of the corresponding convergents. Assume that all $a_i^1 > 1$ for all $i \in \mathbb{N}$ and that $(q_k^1)_{i \in \mathbb{N}}$ has a subsequence $(q_{k_l})_{l \in \mathbb{N}}$ which satisfies the conditions of Lemma~\ref{lem:cond} for all $l \in \mathbb{N}$. Then there exists an $\alpha_2,\alpha_3$ such that  $(\alpha_1,\alpha_2,\alpha_3)$ satisfies $g_{N}(\alpha,\mathcal{L},\norm{\cdot}_q) = 1$ for infinitely $N \in \mathbb{N}$ and all $1 \leq q \leq \infty$ simultaneously.
\end{theorem}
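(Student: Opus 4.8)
The plan is to work with the standard lattice $\mathcal{L}=\mathbb{Z}^3$ and, for the infinitely many good indices $l$, to take $N=q_{k_l}^1$. The driving fact is the classical law of best approximation of the second kind: writing $\lVert x\rVert:=\min_{p\in\mathbb{Z}}|x-p|$, for an irrational $\beta$ with convergent denominators $(q_k)$ and any $1\le m<q_k$ one has $\lVert m\beta\rVert\ge \lVert q_{k-1}\beta\rVert$, with equality exactly for $m=q_{k-1}$. The construction of $\alpha_2,\alpha_3$ is carried out in Lemma~\ref{lem:cond}, whose hypotheses on the subsequence $(q_{k_l})$ are tailored to achieve the following: for every $l$ the integer $m_l:=q_{k_l-1}^1$ is a convergent denominator of $\alpha_2$ and of $\alpha_3$ as well, and the convergent denominator of $\alpha_j$ immediately following $m_l$ is at least $q_{k_l}^1$ for $j=2,3$. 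By the best approximation property this guarantees that the \emph{same} integer $m_l$ minimizes $\lVert m\alpha_j\rVert$ over $1\le m<q_{k_l}^1$ for all three $j$ simultaneously.

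Granting this, the rest is short and elementary. Since every partial quotient of $\alpha_1$ satisfies $a_i^1>1$, we have $q_{k_l}^1=a_{k_l}^1 q_{k_l-1}^1+q_{k_l-2}^1\ge 2q_{k_l-1}^1+1$, hence $m_l\le (N-1)/2$. On the torus $\mathbb{R}^3/\mathbb{Z}^3$ the $L_q$-distance is computed coordinatewise, $d_q(0,m\alpha)=\norm{(\lVert m\alpha_1\rVert,\lVert m\alpha_2\rVert,\lVert m\alpha_3\rVert)}_q$, and every $\norm{\cdot}_q$ is nondecreasing in the absolute values of the coordinates; combined with the previous sentence this gives $d_q(0,m_l\alpha)\le d_q(0,m\alpha)$ for all $m\in\{1,\dots,N-1\}$ and all $q\in[1,\infty]$. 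Thus $d_q(0,m_l\alpha)$ is the smallest distance between distinct points of $S_N=\{z_1,\dots,z_N\}$. Finally, because $m_l\le (N-1)/2$, for every $i$ at least one of $z_{i+m_l},z_{i-m_l}$ lies in $S_N$, and $d_q(z_i,z_{i\pm m_l})=d_q(0,m_l\alpha)$; since no point of $S_N$ is closer to $z_i$, the nearest neighbor distance of $z_i$ equals $d_q(0,m_l\alpha)$, independently of $i$ and of $q$. Therefore $g_N(\alpha,\mathbb{Z}^3,\norm{\cdot}_q)=1$ for all $q$ at once, and as $l$ ranges over $\mathbb{N}$ this yields infinitely many admissible $N$.

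The genuinely delicate point is entirely in the first paragraph and is number-theoretic. One must produce a single $\alpha_2$ (and likewise $\alpha_3$) whose continued fraction passes, infinitely often, through the prescribed convergent denominators $q_{k_l-1}^1$, each followed by a denominator overshooting $q_{k_l}^1$, while keeping $1,\alpha_1,\alpha_2,\alpha_3$ linearly independent over $\mathbb{Q}$ so that the Kronecker sequence is equidistributed in $\mathds{T}_3$ and the example is genuinely three-dimensional rather than supported on a subtorus. Viewing a continued fraction as a path through the tree of coprime pairs $(q_{k-1},q_k)$, this amounts to steering the path of $\alpha_2$ through an infinite prescribed family of nodes while otherwise keeping it away from that of $\alpha_1$; the arithmetic side conditions on the gaps $k_{l+1}-k_l$ and on the growth of the $q_{k_l}^1$ that make this possible are precisely what Lemma~\ref{lem:cond} isolates. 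Given such $\alpha_2,\alpha_3$, the equality $g_N=1$ at $N=q_{k_l}^1$ is the elementary computation above, and the two-dimensional assertion of the abstract is the same argument with $\mathbb{Z}^3$ replaced by $\mathbb{Z}^2$ and $\alpha_3$ dropped.
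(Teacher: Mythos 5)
Your second paragraph --- the reduction of $g_N=1$ to the statement that one single integer $m$ minimizes $\lVert m\alpha_j\rVert$ in all three coordinates simultaneously over $1\le m\le N-1$, together with $2m\le N-1$ and the coordinatewise monotonicity of all $L_q$-norms on the torus --- is correct, and it is in substance the same mechanism the paper runs through Lemmas~\ref{lem:hi:part1} and~\ref{lem:hi:part2} and the proof of Theorem~\ref{thm:1}. The problem is that the entire content of the theorem is the \emph{existence} of $\alpha_2,\alpha_3$ with such a simultaneous best-approximation property, and you never construct them: you assert that the hypotheses of Lemma~\ref{lem:cond} are ``tailored to achieve'' that $m_l:=q_{k_l-1}^1$ is a convergent denominator of $\alpha_2$ and $\alpha_3$ whose successor is at least $q_{k_l}^1$. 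That is not what Lemma~\ref{lem:cond} provides. It is a pure gcd criterion guaranteeing $\gcd(q_{k_{l+1}-1}^1,q_{k_l}^1)=1$, which the paper needs only so that the congruence \eqref{eq1} is solvable in the inductive construction of $\alpha_2$; the $\alpha_2$ produced there has convergent denominators $q_l^2=q_{k_l}^1$, i.e.\ the subsequence terms themselves, not their predecessors $q_{k_l-1}^1$, and $\alpha_3$ is simply taken to be $1-\alpha_1$, for which $\norm{\left\{m(1-\alpha_1)\right\}}=\norm{\left\{m\alpha_1\right\}}$ for every $m$, so the third coordinate imposes no new condition.

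Concretely, with the construction that is actually available your choice $N=q_{k_l}^1$, $m_l=q_{k_l-1}^1$ breaks down in the second coordinate: the minimizer of $\lVert m\alpha_2\rVert$ over $1\le m<q_{k_l}^1=q_l^2$ is $q_{l-1}^2=q_{k_{l-1}}^1$, which in general differs from $q_{k_l-1}^1$, so no single $m$ of the form you propose works for all three coordinates. Either you must genuinely carry out a different continued-fraction construction forcing the integers $q_{k_l-1}^1$ to occur among the denominators of $\alpha_2$ and $\alpha_3$ with the required gap --- which is the delicate step and is missing --- or you should use the common denominator $q:=q_{k_l}^1$ itself as the counting distance and take $N$ in the window $2q+1\le N\le q_{k_l+1}^1$, which is nonempty because all $a_i^1>1$; the latter is exactly the paper's route. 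A further small point: the extra demand that $1,\alpha_1,\alpha_2,\alpha_3$ be linearly independent over $\mathbb{Q}$ is not part of the statement, is not needed for $g_N=1$, and is in fact violated by the paper's choice $\alpha_3=1-\alpha_1$.
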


\section{Sequences with few nearest neighbor distances}
We will now show that there exist vectors $(\alpha_1,\alpha_2,\alpha_3)$ such that $g_N(\alpha,\mathcal{L},\norm{\cdot}_q)=1$ for infinitely many $N \in \mathbb{N}$ independent of $1 \leq q \leq \infty$. The construction of the $\alpha_i$ is based on properties of the continued fraction expansion. Therefore, we briefly fix notation and summarize some of their important properties. For more details, we refer the reader to \cite{Bs96,Nie92}. Let $[a_0^i;a_1^i,\ldots]$ be the continued fraction expansion of $\alpha_i$ and denote the corresponding sequence of convergents by $(p_n^i/q_n^i)_{n \in \mathbb{N}_0}$. Recall that
\begin{align} \label{eq:p}
p_{-2} = 0, p_{-1} = 1, p_n = a_np_{n-1} + p_{n-1}, n \geq 0
\end{align}
\begin{align} \label{eq:q}
q_{-2} = 1, q_{-1} = 0, q_n = a_nq_{n-1} + q_{n-1}, n \geq 0
\end{align}
 Since every unimodular lattice $\mathcal{L} \subset \mathbb{R}^d$ is of the form $\mathbb{Z}^dM$ with $M \in \text{SL}(d,\mathbb{R})$, we may in the following restrict ourselves to the case $\mathcal{L} = \mathbb{Z}^d$. Let $V = \left\{ v_1, v_2, \ldots, v_N \right\}$ be a set of points in $\mathbb{R}^d$ and let $d(x,y)$ be a metric on $\mathbb{R}^d$. A \textit{nearest neighbor} of $v_i$ is a point $v_j$ with minimum distance from $v_i$. In order to make $v_j$ unique, we use an idea which we found in \cite{EPY97}, and let $v_j$ be the maximum index in $V$ with this property and denote it by $nn_1(v_i)$. This definition is slightly different than the one used to draw the figures in \cite{HM20}, where any point having the same distance from $v_i$ as $nn_1(v_i)$ is a nearest neighbor. For any $v_i$ we define an edge by $e_1(v_i) = \langle v_i, nn_1(v_i) \rangle$ and obtain the \textit{nearest neighbor graph} $(V,E_1)$ where $E_1=\left\{ e_1(v_i) | v_i \in V \right\}$.\\[12pt] 
 \paragraph{The Counting Metric.} Besides the $L_q$ metric on $\mathbb{R}^d$ we also need to consider the counting metric of the sequence. For $i\alpha \in S_N$ let $h_i(N) := \left| \frac{nn_1(i\alpha)}{\alpha} -i\right|$ be the closest neighbor in the counting metric of the sequence. Note that $h_i(N) > h_i(M)$ implies that $h_1(N-i+1) > h_1(M-i+1)$ because $h_1(N)$ is necessarily the first of the $h_i(N)$ to increase. This means that $h_1(N)$ has a special role. Although the proof is elementary, the following lemma plays a crucial role in the remainder of this paper.
\begin{lemma} \label{lem:hi:part1} 
	For all $n,k \in \mathbb{N}$ with $k$ such that $h_1(n+k) = h_1(n)$ we have
	$$h_{1+k}(n+k) = h_{1}(n)$$
\end{lemma}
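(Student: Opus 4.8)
The plan is to reduce everything to the behaviour of the map $\mu\mapsto d(z_0,z_\mu)$ on $\mathbb N$, using two trivial features of a Kronecker sequence: the torus distance $d(z_a,z_b)$ depends on $a,b$ only through $a-b$, and the index shift $z_m\mapsto z_{m+k}$ is an isometry carrying $S_n=\{z_1,\dots,z_n\}$ bijectively onto the ``tail'' $T:=\{z_{1+k},\dots,z_{n+k}\}\subseteq S_{n+k}$, sending $z_1$ to $z_{1+k}$ and preserving the order of indices. Set $h:=h_1(n)=h_1(n+k)$ and let $\delta$ be the corresponding nearest-neighbour distance of $z_1$, so that $nn_1(z_1)=z_{1+h}$ both inside $S_n$ and inside $S_{n+k}$. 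Unwinding the definitions, the hypothesis says precisely that $z_{1+h}\in S_n$, that $\delta=\min\{d(z_1,z_{1+\mu}):1\le\mu\le n-1\}$ equals $\min\{d(z_1,z_{1+\mu}):1\le\mu\le n+k-1\}$, and that $h$ is, in both ranges, the largest value of $\mu$ attaining this minimum -- the last clause being forced by the maximum-index tie-break built into the definition of $nn_1$.

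First I would transport the $S_n$-picture along the shift: since $T$ is an isometric copy of $S_n$ with $z_{1+k}\leftrightarrow z_1$, the nearest neighbour of $z_{1+k}$ within $T$ is $z_{1+k+h}$, at distance $\delta$, and no point of $T$ of index exceeding $1+k+h$ lies at distance $\delta$ from $z_{1+k}$. Next I would handle the only points of $S_{n+k}$ outside $T$, namely the ``head'' $z_1,\dots,z_k$: for $1\le j\le k$ one has $d(z_{1+k},z_j)=d(z_1,z_{1+(1+k-j)})$ with $1\le 1+k-j\le k\le n+k-1$, so by the hypothesis every such distance is $\ge\delta$. Hence the nearest-neighbour distance of $z_{1+k}$ in all of $S_{n+k}$ is again $\delta$, and the points of $S_{n+k}$ realising it are $z_{1+k+h}$, possibly some further tail points $z_{1+k+\mu}$ with $\mu<h$, and possibly some head points, the latter all of index $<1+k\le 1+k+h$. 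The largest index among all of them is therefore $1+k+h$, so the maximum-index tie-break gives $nn_1(z_{1+k})=z_{1+k+h}$, i.e.\ $h_{1+k}(n+k)=h=h_1(n)$.

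Once this reformulation is set up the argument is pure bookkeeping, so there is no deep obstacle; the delicate point is the treatment of ties. It is genuinely used that $nn_1$ is single-valued and picks the \emph{maximum} index: a head point $z_j$ may sit at distance exactly $\delta$ from $z_{1+k}$ and yet contribute an index-distance strictly smaller than $h$, and the maximum-index convention is precisely what lets the tail point $z_{1+k+h}$ win such a tie (with the multivalued convention from \cite{HM20} the conclusion, and indeed $h_{1+k}$ itself, would no longer be meaningful). The assumption is used in both halves: $h_1(n)=h$ locates $nn_1(z_1)$ at offset $h$ inside $S_n$, while $h_1(n+k)=h$ secures $d(z_{1+k},z_j)\ge\delta$ for every head index $j$, i.e.\ for all offsets up to $k$, even when $k$ exceeds $n-1$.
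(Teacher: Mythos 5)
Your proof is correct and follows essentially the same route as the paper: the paper's one-line argument pulls the nearest neighbour of $z_{1+k}$ back to that of $z_1$ via the isometry $R_\alpha^{-k}$, which is exactly your identification of $S_n$ with the tail of $S_{n+k}$. You merely make explicit what the paper compresses into ``since $h_1$ is the first of the $h_i$ to increase'', namely the treatment of the head points $z_1,\dots,z_k$ via the hypothesis $h_1(n+k)=h_1(n)$ and the maximum-index tie-break.
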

\begin{proof} 
	The nearest neighbor of any $\left\{(1+k)\alpha\right\}$ can be pulled back to the nearest neighbor of $\left\{ 1 \alpha \right\}$ by $R_{\alpha}^{-k}$, where $R_{\alpha}(z)= z\alpha$ for $z \in \mathds{T}^d$. Since $h_1$ is the first of the $h_i$ to increase, the claim follows.
\end{proof}
Hence, $h_1(N)$ essentially determines the behavior of the nearest neighbor distance structure. Another simple observation serves as our base for constructing sequences with as few nearest neighbor distances as possible.
\begin{lemma} \label{lem:hi:part2} If there exists a multiindex $(n_1,n_2,\ldots,n_d) \in \mathbb{N}^d$ with $q_{n_1}^1 = q_{n_2}^2 = \ldots = q_{n_d}^d =:q$, then $h_1(q+1) = q$ independent of the $L_p$ metric.
\end{lemma}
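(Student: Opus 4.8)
The plan is to combine two standard facts: the classical law of best approximation for continued fractions, and the observation that for $\mathcal{L}=\mathbb{Z}^d$ every $L_p$-metric on $\mathds{T}^d$ is assembled coordinate-wise from the distance-to-nearest-integer function. Having reduced, as in the text, to $\mathcal{L}=\mathbb{Z}^d$, I would first record that for $x,y\in\mathds{T}^d$ and $1\le p<\infty$ one has $d_p(x,y)^p=\sum_{i=1}^d \norm{x_i-y_i}^p$, while $d_\infty(x,y)=\max_{1\le i\le d}\norm{x_i-y_i}$, where $\norm{t}$ denotes the distance of $t\in\mathbb{R}/\mathbb{Z}$ to the nearest integer. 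In particular the distance from $z_1$ to $z_j$ inside $S_{q+1}$ depends on $j$ only through the coordinate quantities $\norm{(j-1)\alpha_i}$, $1\le i\le d$, and the index differences occurring for $j\in\{2,\dots,q+1\}$ are exactly $1,2,\dots,q$.

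Next I would invoke the best approximation property of convergents applied in each coordinate: if $q=q_{n_i}^i$ is the denominator of a convergent of $\alpha_i$ (and $q\ge 2$, hence $n_i\ge 1$ since $q_0^i=1$), then $\norm{q\alpha_i}<\norm{k\alpha_i}$ for every integer $k$ with $1\le k<q$, because $q<q_{n_i+1}^i$ and $q_{n_i}^i$ is a best approximation denominator; equivalently $p_{n_i}^i$ is the nearest integer to $q\alpha_i$ and $q$ is the unique minimiser of $\norm{k\alpha_i}$ over $1\le k\le q$. Using the common value $q=q_{n_1}^1=\dots=q_{n_d}^d$ this gives $\norm{q\alpha_i}<\norm{k\alpha_i}$ simultaneously for all $i$ and all $1\le k\le q-1$. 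Since this is a coordinate-wise strict domination and $t\mapsto t^p$ (resp.\ $\max$) is strictly monotone on $[0,\infty)$, it upgrades at once to $d_p(z_1,z_{q+1})<d_p(z_1,z_j)$ for every $j$ with $2\le j\le q$ and every $1\le p\le\infty$.

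Finally, inside $S_{q+1}=\{z_1,\dots,z_{q+1}\}$ the only candidates for $nn_1(z_1)$ are $z_2,\dots,z_{q+1}$, and the previous step shows that $z_{q+1}$ is the unique strictly closest one in every $L_p$-metric; hence there are no ties, the tie-breaking rule is irrelevant, $nn_1(z_1)=z_{q+1}$, and therefore $h_1(q+1)=\lvert (q+1)-1\rvert=q$, independently of $p$. The degenerate case $q=1$ is immediate, since then $S_2=\{z_1,z_2\}$ forces $nn_1(z_1)=z_2$.

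The argument is short and I do not expect a serious obstacle; the one point requiring care is quoting the best approximation theorem in the correct form — one must compare $\norm{q_n\alpha}$ with $\norm{k\alpha}$ for all $k$ up to $q_{n+1}-1$, not merely with the other convergents, as this is exactly what forces $q=q_{n_i}^i$ to beat every smaller index in that coordinate. The structural reason the conclusion is uniform across all $L_p$-metrics is precisely that one obtains \emph{strict} domination in each coordinate, a property preserved by every $\ell^p$-aggregation, including $p=\infty$.
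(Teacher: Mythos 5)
Your argument is correct and is essentially the paper's own proof: both rest on the best approximation property of convergent denominators, giving strict coordinate-wise domination $\norm{q\alpha_i}<\norm{k\alpha_i}$ for $1\le k<q$, which then transfers to every $L_p$-metric. You merely spell out the coordinate-wise aggregation and the tie-free conclusion that the paper leaves implicit, so there is nothing to add.
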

\begin{proof} Let $\norm{\cdot}$ be the norm of the one-dimensional torus, i.e. $\norm{x} = \min(x-\lfloor x \rfloor, 1-(x-\lfloor x \rfloor))$, where $\lfloor x \rfloor$ denotes the Gau\ss{} bracket. The claim follows from the fact $\norm{\left\{q_{n_i}^i\alpha_i\right\}} < \norm{\left\{q_{n_i-1}^i\alpha_o\right\}}$ for all $i$ but $\norm{\left\{k \alpha_i\right\}} \geq  \norm{\left\{q_{n_i}^i\alpha_i\right\}}$ for all $k < q_{n_i}^i$.\\[12pt]
\end{proof}
Although Lemma~\ref{lem:hi:part2} (in combination with Lemma~\ref{lem:hi:part1}) is the best achievable result in the general situation, we can prove stronger distance properties for non-generic $\alpha = (\alpha_1,\alpha_2,\alpha_3)$. At first, we present a very simple construction of $\alpha = (\alpha_1,\alpha_2)$, which captures the gist of our idea, before we come to a more general one, which allows for an application in dimension $d=3$, too.\\[12pt]
\paragraph{A simple construction.} Let $\alpha_1 = [a_0^1=0,a_1^1=1,a_2^1,\ldots] \in (1/2,1)$ be arbitrary and choose $\alpha_2$ as the real number which has the following continued fraction expansion $a_0^2=0,a_2^2 = 1 + a_2^1$ and $a_i^2 = a_{i+1}^1$. Consequently by \eqref{eq:q}, we have
\begin{align*}
    q_1^{2} & = 1 = q_2^1,\\
    q_2^{2} & = 1+a_2^1 = q_3^1,\\
    q_i^{2} & = a_i^2q_{i-1}^2 + q_{i-1}^2 = a_{i+1}^1q_{i}^1 + q_{i-1}^1 = q_{i+1}^1 \quad \text{for} \ i \geq 3.
\end{align*}
The construction is equivalent to setting $\alpha_2 = 1 - \alpha_1$. \begin{theorem} \label{thm:1}
    For $\alpha = (\alpha_1,1-\alpha_1)$ the two-dimensional Kronecker sequence has at most three different nearest neighbor distances, i.e. $1 \leq g_N \leq 3$ for all $N \in \mathbb{N}$. If we have in addition $a_1^i > 1$ for all $i$, then there are at most two different nearest neighbor distances, and there is a sequence $(N_{1_i}) \subset \mathbb{N}$ with $g_{N_{1_i}} = 1$ for all $i \in \mathbb{N}$ and a sequence $(N_{2_j}) \subset \mathbb{N}$ with $g_{N_{2_j}} = 2$ for all $j \in \mathbb{N}$.
\end{theorem}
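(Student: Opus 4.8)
The plan is to exploit that, for $\alpha=(\alpha_1,1-\alpha_1)$, the whole sequence $S_N$ lies on a single closed geodesic of the torus, which collapses the question to a one dimensional count. The first step is the identity $n\alpha_2=n-n\alpha_1\equiv-n\alpha_1\pmod 1$, which gives $z_i-z_j\equiv\big((i-j)\alpha_1,-(i-j)\alpha_1\big)\pmod{\mathbb{Z}^2}$ for all $i,j$. Minimising the two coordinates of a lattice representative of this residue class separately, one obtains for every $1\le q\le\infty$
\begin{align*}
d_q(z_i,z_j)=c_q\,\norm{(i-j)\alpha_1},\qquad c_q:=2^{1/q}\ \ (\text{and }c_\infty:=1),
\end{align*}
where $\norm{x}$ denotes the distance from $x$ to the nearest integer. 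Hence the nearest neighbour of $z_i$ in $S_N$ is a $z_j$ with $j\in\{1,\dots,N\}\setminus\{i\}$ minimising $\norm{(j-i)\alpha_1}$, and, since $\norm{m\alpha_1}=\norm{-m\alpha_1}$, its distance to $z_i$ equals $c_q\,\mu_{\max(i-1,\,N-i)}$ where $\mu_t:=\min\{\norm{m\alpha_1}:1\le m\le t\}$ and $\mu_0:=+\infty$. Thus $g_N(\alpha,\mathbb{Z}^2,\norm{\cdot}_q)$ is the number of distinct values among $\mu_{\max(i-1,N-i)}$, $1\le i\le N$; in particular it does not depend on $q$.

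The second step makes this count explicit. As $i$ runs through $\{1,\dots,N\}$ the integer $\max(i-1,N-i)$ runs through exactly $\{\lfloor N/2\rfloor,\lfloor N/2\rfloor+1,\dots,N-1\}$, and by the best approximation property of continued fractions the map $t\mapsto\mu_t$ is constant, equal to $\norm{q_k^1\alpha_1}$, on each block $q_k^1\le t<q_{k+1}^1$. Hence for $N\ge2$,
\begin{align*}
g_N(\alpha,\mathbb{Z}^2,\norm{\cdot}_q)=\#\big\{\,k\ge 0:\ q_k^1\le N-1\ \text{and}\ q_{k+1}^1>\lfloor N/2\rfloor\,\big\}.
\end{align*}
As $(q_k^1)$ is nondecreasing, the admissible indices form a contiguous block $[k_1,k_2]$; since $k_1$ is admissible, $q_{k_1+1}^1\ge\lfloor N/2\rfloor+1$, and therefore $q_{k_1+3}^1\ge 2q_{k_1+1}^1+q_{k_1}^1\ge 2\lfloor N/2\rfloor+3>N-1$, so $k_1+3$ is not admissible. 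Thus $k_2-k_1\le 2$ and $g_N\le 3$, which together with the obvious $g_N\ge 1$ for $N\ge2$ yields the first assertion (and reproves the pertinent case of the Three Gap Theorem). If in addition $a_i^1>1$ for all $i$, then already $q_{k_1+2}^1=a_{k_1+2}^1q_{k_1+1}^1+q_{k_1}^1\ge 2q_{k_1+1}^1+q_{k_1}^1>N-1$, so $k_2-k_1\le1$ and $g_N\le2$ for every $N$.

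The third step reads off the two subsequences, still assuming $a_i^1>1$ for all $i$. For $N=q_j^1$ with $j\ge1$: from $a_j^1\ge2$ we get $\lfloor q_j^1/2\rfloor\ge q_{j-1}^1$, so $\lfloor N/2\rfloor\ge q_{j-1}^1$ while $q_j^1>\lfloor N/2\rfloor$; hence $k=j-1$ is admissible, and it is the only admissible index (for $k\le j-2$ one has $q_{k+1}^1\le q_{j-1}^1\le\lfloor N/2\rfloor$, for $k\ge j$ one has $q_k^1\ge q_j^1>N-1$), so $g_{q_j^1}=1$. For $N=q_j^1+1$ with $j\ge1$: now $N-1=q_j^1$ and $\lfloor N/2\rfloor<q_j^1$ but still $\lfloor N/2\rfloor\ge q_{j-1}^1$; so $k=j-1$ and $k=j$ are both admissible while $k=j-2$ is not, whence $g_{q_j^1+1}=2$. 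Taking $(N_{1_i})=(q_i^1)_{i\ge1}$ and $(N_{2_j})=(q_j^1+1)_{j\ge1}$ completes the argument via the formula of the first two steps.

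The main work is the elementary bookkeeping of the first two steps: verifying that $\max(i-1,N-i)$ attains precisely the integers in $[\lfloor N/2\rfloor,N-1]$, and matching the best approximation inequality $\norm{m\alpha_1}\ge\norm{q_k^1\alpha_1}$ for $1\le m<q_{k+1}^1$ (with equality only at $m=q_k^1$) to the index conventions $q_0^1=1$, $q_1^1=a_1^1$. No case analysis of the three gap configuration is needed, and the hypothesis $a_i^1>1$ enters solely through the inequality $q_{k+2}^1\ge 2q_{k+1}^1+q_k^1$.
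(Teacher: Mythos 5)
Your proof is correct, and it takes a genuinely different route from the paper. The paper argues through its counting-metric machinery: Lemma~\ref{lem:hi:part2} (the coincidence $q_i^2=q_{i+1}^1$ for $\alpha_2=1-\alpha_1$) shows where $h_1$ jumps, and Lemma~\ref{lem:hi:part1} transfers this to all $h_i$, from which the bounds $g_N\le 3$, respectively $g_N\le 2$, and the subsequences with $g_N=1,2$ are read off somewhat tersely. You instead exploit the geometric collapse specific to this choice of $\alpha$: since $n\alpha_2\equiv -n\alpha_1 \pmod 1$, all points lie on the anti-diagonal, so every $L_q$-distance equals $2^{1/q}\norm{(i-j)\alpha_1}$ and the problem becomes purely one-dimensional. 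Your resulting exact formula $g_N=\#\{k: q_k^1\le N-1,\ q_{k+1}^1>\lfloor N/2\rfloor\}$ makes the independence of $q$, both upper bounds, and the values $g_{q_j^1}=1$, $g_{q_j^1+1}=2$ completely explicit, and in this respect is more rigorous and more informative than the paper's sketch (it even identifies exactly which $N$ give which count). What the paper's approach buys in exchange is reusability: its lemmas only need the denominators of the convergents of $\alpha_2$ to match (a subset of) those of $\alpha_1$, so the same argument carries over verbatim to the later ``more general construction'' and the three-dimensional Theorem~\ref{thm:construction:3d}, whereas your anti-diagonal reduction is tied to $\alpha_2=1-\alpha_1$ and does not generalize to those settings. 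One small point worth making explicit in your write-up: in the degenerate case $a_1^1=1$ one has $q_0^1=q_1^1=1$, so the block $[q_0^1,q_1^1)$ is empty and $\norm{q_0^1\alpha_1}=\norm{q_1^1\alpha_1}$; your counting formula remains exact because $k=0$ is admissible only when $\lfloor N/2\rfloor<1$, i.e.\ never for $N\ge 2$, but this deserves a sentence so the reader sees no double count can occur.
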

\begin{proof} By Lemma~\ref{lem:hi:part2}, the function $h_1(n)$ increases by $1$ for all $n=q_{i+1}^1+1=q_{i}^2+1$. For $n \in \left\{ q_i^{2}+2, q_i^2+3, \ldots, q_{i+1}^2 \right\}$ the function $h_1(n)$ remains constant because for each vector component we have $\norm{\left\{1\alpha_i\right\} - \left\{k\alpha_i\right\}} > \norm{\left\{1\alpha_i\right\} - \left\{(q_{i}^2+1)\alpha_i\right\}}.$
Since $q_{i+1} \geq q_{i} + q_{i-1}$ there are thus at most three different nearest neighbor distances by Lemma~\ref{lem:hi:part1}. If all the $a_i > 1$, we get again by Lemma~\ref{lem:hi:part1} that $h_i(N_1) = h_1(N_1)$ for all $i =1,2,\ldots,N_1$ if $N_1 \in \left\{ 2q_i^{2} + 1, 2q_i^{2} + 2, \ldots, q_{i+1}^2 \right\}$. Furthermore, it follows that $g_{N_2} = 2$  if $N_2 \in \left\{ q_i^{2} + 1, q_i^{2} + 2, \ldots, 2q_i^{2} \right\}$.
\end{proof}
\paragraph{A more general construction.} In order to find further examples where the one distance property appears, it suffices if the weaker condition is satisfied that the convergents of $\alpha_2$ are a subset of those of $\alpha_1$, i.e. $\left\{(q_i^2) | i \in \mathbb{N}_0\right\} \subset \left\{(q_i^1) | i \in \mathbb{N}_0\right\}$. This cannot only be achieved for $\alpha_2 = 1 - \alpha_1$ but for a much broader class of examples as the following construction will show. The proof of Theorem~\ref{thm:1} can then be used to prove that the  one distance property holds.\\[12pt]
The coefficients of the continued fraction expansion of $\alpha_1$ and $\alpha_2$ have now to be considered simultaneously. We choose $a_1^1,\ldots,a_{k_1}^1$ arbitrarily. This implies that the construction works for a dense subset of $(0,1)$. Then we set $a_2^0 = 0$ and $a_2^1 = q_{k_1}^1$. Hence $q_1^2 = q_{k_1}^1$. 
We proceed inductively and assume that we have already reached $q_l^2 = q_{k_l}^1$. Again we may choose $a_{k_l+1}^1,\ldots,a_{k_{l+1}-2}^1$ arbitrarily. Next we assume that $a_{k_{l+1}-1}^1$ is such that $\gcd(q_{k_{l+1}-1}^1,q_{k_l}^1)=1$. Then $a_{k_{l+1}}^1$ may be any natural number with
\begin{align} \label{eq1}
a_{k_{l+1}}^1q_{k_{l+1}-1}^1+q_{k_{l+1}-2}^1 \mod (q_{k_l}^1)) = q_{l-1}^2.
\end{align}
Finally, we set $a_{l+1}^2 = (a_{k_{l+1}}^1q_{k_{l+1}-1}^1+q_{k_{l+1}-2}^1-q_{l-1}^2)/q_{k_l}^1 \in \mathbb{N}$. Therefore
$$q_{l+1}^2= a_{l+1}^2 q_{l}^2 + q_{l-1}^2 = a_{k_{l+1}}^1q_{k_{l+1}-1}^1 + q_{k_{l+1}-2}^1 = q_{k_{l+1}}^1.$$
The condition $\gcd(q_{k_{l+1}-1}^1,q_{k_l}^1)=1$ holds automatically if $k_{l+1} = k_l + 2$. In all other cases this can be assured if $a_{k_{l+1}}-1$ satisfies two extra conditions. 
\begin{lemma} \label{lem:cond}
Let $\gcd(q_{k_l}^1,q_{k_{l+1}-2}^1) = b$ and define $c:= q_{k_l}^1/b, d:= \gcd(c,q_{k_{l+1}-3}^1)$ and $e := c/d$. Then $\gcd(q_{k_{l+1}-1}^1,q_{k_l}^1)=1$ holds if $\gcd(d,a_{k_{l+1}-1}^1) = 1$ and $e|a_{k_{l+1}-1}^1$.
\end{lemma}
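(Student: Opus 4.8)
The plan is to reduce the statement to a short case analysis on prime divisors, powered by the single classical fact that consecutive continued–fraction denominators are coprime, i.e. $\gcd(q_n^1,q_{n-1}^1)=1$ for every $n$ (immediate from the identity $p_n^1 q_{n-1}^1 - p_{n-1}^1 q_n^1 = \pm 1$, a consequence of \eqref{eq:p} and \eqref{eq:q}). Write for brevity $m:=k_{l+1}-1$, $Q:=q_{k_l}^1$ and $a:=a_{k_{l+1}-1}^1=a_m^1$, so that \eqref{eq:q} reads $q_m^1 = a\,q_{m-1}^1 + q_{m-2}^1$ with $m-1=k_{l+1}-2$ and $m-2=k_{l+1}-3$. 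In this notation $b=\gcd(Q,q_{m-1}^1)$, $c=Q/b$ (an integer since $b\mid Q$), $d=\gcd(c,q_{m-2}^1)$, $e=c/d$ (an integer since $d\mid c$), and we must show that $\gcd(q_m^1,Q)=1$ under the hypotheses $\gcd(d,a)=1$ and $e\mid a$.

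I would argue by contradiction: suppose a prime $p$ divides both $Q$ and $q_m^1 = a\,q_{m-1}^1 + q_{m-2}^1$, and first distinguish whether $p\mid q_{m-1}^1$. If $p\mid q_{m-1}^1$, then the relation $q_m^1 = a\,q_{m-1}^1 + q_{m-2}^1$ forces $p\mid q_{m-2}^1$, contradicting $\gcd(q_{m-1}^1,q_{m-2}^1)=1$. Hence $p\nmid q_{m-1}^1$, and therefore $p\nmid b$ (because $b\mid q_{m-1}^1$), so $p\mid Q=bc$ yields $p\mid c$ by Euclid's lemma.

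Next split according to whether $p\mid q_{m-2}^1$. If $p\mid q_{m-2}^1$, then $p\mid\gcd(c,q_{m-2}^1)=d$; moreover $p\mid q_m^1$ together with $p\nmid q_{m-1}^1$ forces $p\mid a$, so $p\mid\gcd(d,a)=1$, impossible. Otherwise $p\nmid q_{m-2}^1$, hence $p\nmid d$ (because $d\mid q_{m-2}^1$), so $p\mid c=de$ gives $p\mid e$ by Euclid's lemma, and then $e\mid a$ forces $p\mid a$; but $p\mid q_m^1$ now yields $p\mid q_m^1 - a\,q_{m-1}^1 = q_{m-2}^1$, again a contradiction. In every branch we reach a contradiction, so no such prime $p$ exists and $\gcd(q_{k_{l+1}-1}^1,q_{k_l}^1)=1$.

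The only genuinely delicate point is bookkeeping: one must keep the index shifts ($k_{l+1}-1$, $k_{l+1}-2$, $k_{l+1}-3$ playing the roles of $m$, $m-1$, $m-2$) and the three nested quantities $b$, $d$, $e$ straight, and invoke coprimality of consecutive denominators exactly at the two places where a prime is shown to divide two adjacent $q_j^1$. There is no analytic or deeper number-theoretic ingredient beyond Euclid's lemma — the lemma simply peels off the prime factors of $q_{k_l}^1$ in the three successive layers $b$, $d$, $e$, each layer being eliminated by one of the available constraints (coprimality of adjacent denominators, $\gcd(d,a)=1$, and $e\mid a$).
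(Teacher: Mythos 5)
Your proof is correct and follows essentially the same route as the paper: a prime-by-prime case analysis on the divisors of $q_{k_l}^1$, peeling off the layers $b$, $d$, $e$ via Euclid's lemma, the recurrence $q_m^1=a\,q_{m-1}^1+q_{m-2}^1$, and coprimality of consecutive denominators. If anything, your version is slightly more careful than the paper's, since you make explicit where the hypothesis $\gcd(d,a_{k_{l+1}-1}^1)=1$ enters and your case split avoids the (harmless) ambiguity when $p$ divides both $d$ and $e$.
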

We remark that $c=1$ if $k_{l+1} = k_l+2$ and that $d,e=1$ if $c=1$.
\begin{proof} We consider the prime divisors $p \in \mathbb{N}$ of $q_{k_l}^1$ and show that none of them can divide $q_{k_{l+1}-1}^1$. If $p | b$, then $p|q_{k_{l+1}-2}^1$ and thus $p \nmid q_{k_{l+1}-1}^1$ because we have $\gcd(q_{k_{l+1}-2}^1,q_{k_{l+1}-1}^1)=1$. Otherwise we have that $p | c$ by the definition of $c$ and thus either $p | d$ or $p | e$. If $p | d$, then $p | q_{k_{l+1}-3}$ and therefore $p \nmid q_{k_{l+1}-2}$ and also $p \nmid q_{k_{l+1}-1} = a_{k_{l+1}-1}^1q_{k_{l+1}-2}+q_{k_{l+1}-3}$. If $p | e$, then $p \nmid q_{k_{l+1}-3}$ and hence $p \nmid q_{k_{l+1}-1}$ because $p|a_{k_{l+1}-1}^1$.
\end{proof}
From this we can derive the desired theorem in dimension $d=2$.
\begin{theorem} Let $[0;a_1^1,a_2^1,\ldots]$ be the continued fraction of $\alpha_1 \in (0,1)$ and let $(q_k^1)_{k \in \mathbb{N}}$ denote the denominators of the corresponding convergents. Assume that all $a_i^1 > 1$ for all $i \in \mathbb{N}$ and that $(q_k^1)_{i \in \mathbb{N}}$ has a subsequence $(q_{k_l})_{l \in \mathbb{N}}$ which satisfies the conditions of Lemma~\ref{lem:cond} for all $l \in \mathbb{N}$. Then there exists an $\alpha_2$ whose convergents have denominators $(q_{k_l})_{l \in \mathbb{N}}$ and $\alpha = (\alpha_1,\alpha_2)$ satisfies $g_{N_{1_i}}(\alpha,\mathcal{L},\norm{\cdot}_q) = 1$ for infinitely $N_{1_i} \in \mathbb{N}$ and all $1 \leq q \leq \infty$.
\end{theorem}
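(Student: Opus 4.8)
The plan is to establish the two claims of the statement separately: first the existence of $\alpha_2$ with convergent denominators $(q_{k_l})_l$, then the one‑distance property for infinitely many $N$.

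For the first claim I would simply invoke the inductive procedure set up immediately before Lemma~\ref{lem:cond}. One starts from $a_0^2 = 0$, $a_1^2 = q_{k_1}^1$ (so $q_1^2 = q_{k_1}^1$); having arranged $q_l^2 = q_{k_l}^1$, the conditions of Lemma~\ref{lem:cond} force $\gcd(q_{k_{l+1}-1}^1, q_{k_l}^1) = 1$, which makes the congruence~\eqref{eq1} consistent and hence $a_{l+1}^2 = (q_{k_{l+1}}^1 - q_{l-1}^2)/q_{k_l}^1$ a positive integer — in fact, since $a_{k_l+1}^1 > 1$ gives $q_{k_{l+1}}^1 - q_{l-1}^2 \ge q_{k_l+1}^1 - q_{l-1}^2 > q_{k_l}^1$, one even has $a_{l+1}^2 \ge 2$. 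Then $q_{l+1}^2 = a_{l+1}^2 q_l^2 + q_{l-1}^2 = q_{k_{l+1}}^1$, and as there are infinitely many $k_l$ the continued fraction $[0; a_1^2, a_2^2, \ldots]$ defines an irrational $\alpha_2 \in (0,1)$ whose sequence of convergent denominators is exactly $(q_{k_l})_l$; in particular $\{q_l^2\}_l \subseteq \{q_k^1\}_k$.

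For the second claim I would argue as in the proof of Theorem~\ref{thm:1}, the crucial new point being the choice of window. Fix $l$, write $Q := q_{k_l}^1 = q_l^2$, and take $N$ in $\{2Q+1, 2Q+2, \ldots, q_{k_l+1}^1\}$, which is nonempty since $a_{k_l+1}^1 > 1$ forces $q_{k_l+1}^1 \ge 2Q+1$. For any $j\alpha \in S_N$, its nearest neighbor is $(j\pm c)\alpha$ for the $c$ minimising $\norm{c\alpha}_q$ subject to $1 \le c \le M_j := \max(N-j,\, j-1)$, and by translation invariance of the torus metric the nearest‑neighbor distance equals $\min_{1\le c \le M_j} \norm{c\alpha}_q$, regardless of the sign. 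Now $M_j \ge \lfloor N/2 \rfloor \ge Q$, so $c = Q$ is admissible; and for every admissible $c \ne Q$ one has $c \le M_j \le q_{k_l+1}^1 - 1$, whence, by the standard best‑approximation property of convergents: $Q = q_{k_l}^1$ is a convergent denominator of $\alpha_1$ with successor $q_{k_l+1}^1 > c$, so $\norm{c\alpha_1} > \norm{Q\alpha_1}$; and $Q = q_l^2$ is a convergent denominator of $\alpha_2$ with successor $q_{l+1}^2 = q_{k_{l+1}}^1 \ge q_{k_l+1}^1 > c$, so $\norm{c\alpha_2} > \norm{Q\alpha_2}$ (here $\norm{\cdot}$ denotes the one‑dimensional torus norm). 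Since both coordinate norms strictly increase in passing from $Q$ to $c$, we get $\norm{c\alpha}_q > \norm{Q\alpha}_q$ for every $1 \le q \le \infty$ at once. Thus $\min_{1\le c \le M_j} \norm{c\alpha}_q = \norm{Q\alpha}_q$ for all $j$, every point of $S_N$ has nearest‑neighbor distance $\norm{Q\alpha}_q$, and $g_N(\alpha, \mathcal{L}, \norm{\cdot}_q) = 1$; taking, say, $N_{1_l} := 2q_{k_l}^1 + 1$ and letting $l \to \infty$ yields infinitely many such $N$. This is exactly the mechanism of Theorem~\ref{thm:1} — Lemma~\ref{lem:hi:part2} gives $h_1(Q+1) = Q$ and Lemma~\ref{lem:hi:part1} propagates the constancy of $h_1$ to the $h_j$ — the one change being that the window must be cut off at $q_{k_l+1}^1$ rather than at $q_{l+1}^2$, since between $q_{k_l}^1$ and $q_{l+1}^2$ there are intermediate convergent denominators of $\alpha_1$ at which $\norm{c\alpha_1}$ drops below $\norm{Q\alpha_1}$.

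I expect the only genuinely delicate step to be the first one: checking that the inductive construction of $\alpha_2$ never stalls, i.e.\ that at every stage a legitimate choice forcing $a_{l+1}^2 \in \mathbb{N}$ is available, which is precisely what the coprimality conditions in Lemma~\ref{lem:cond} (together with $a_i^1 > 1$) secure. Once $\alpha_2$ is in hand, the one‑distance property follows immediately from the best‑approximation property of continued fractions, as above.
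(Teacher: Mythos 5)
Your argument is correct and rests on the same mechanism as the paper's proof: existence of $\alpha_2$ via the inductive construction preceding Lemma~\ref{lem:cond}, and then the common denominator $Q=q_{k_l}^1=q_l^2$ together with the best-approximation property of convergents, applied coordinatewise, to force every nearest-neighbour distance to equal $\norm{Q\alpha}_q$ (this is what Lemma~\ref{lem:hi:part2} and Lemma~\ref{lem:hi:part1} encode). The paper's own proof is much terser — it records $h_1(q_{k_l}+1)=q_{k_l}$, notes $a_i^2>1$, and appeals to the argument of Theorem~\ref{thm:1} — and the genuine added value in your write-up is the choice of window: you cut it off at $q_{k_l+1}^1$, the next denominator of $\alpha_1$ after the common one, rather than at $q_{l+1}^2$ as a literal transplant of Theorem~\ref{thm:1} would suggest. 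That is exactly the right refinement: for $c$ between $q_{k_l+1}^1$ and $q_{l+1}^2$ one can have $\norm{c\alpha_1}<\norm{Q\alpha_1}$ while $\norm{c\alpha_2}>\norm{Q\alpha_2}$, so the comparison of $L_q$-distances is no longer automatic, whereas on your window $\{2q_{k_l}^1+1,\dots,q_{k_l+1}^1\}$ (nonempty since $a_{k_l+1}^1\geq 2$) both coordinates strictly increase away from $c=Q$ and the conclusion holds for all $1\le q\le\infty$ simultaneously, with infinitely many such $N$. One small caveat on the existence step: coprimality $\gcd(q_{k_{l+1}-1}^1,q_{k_l}^1)=1$ only makes the congruence \eqref{eq1} solvable in an unknown $a_{k_{l+1}}^1$; since $\alpha_1$ is given in the theorem, one must in addition assume that \eqref{eq1} actually holds for the given partial quotients, so your phrase ``consistent and hence a positive integer'' elides a hypothesis — but this matches the paper's own level of detail, whose proof likewise just asserts that $\alpha_2$ ``exists by the general construction.''
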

\begin{proof} The number $\alpha_2$ exists by the general construction. Additionally we have $h_1(q_{k_l}+1) = q_{k_l}$ for all $l \in \mathbb{N}$ by Lemma~\ref{lem:hi:part2}. As the $a_i^1 > 1$ also the $a_i^2 > 1$ by construction. Hence there are infinitely many $N_{1_i} \in \mathbb{N}$ with $g_{N_{1_i}} = 1$ by Lemma~\ref{lem:hi:part2}.
\end{proof}
Combining the simple construction and the more general approach we also obtain Theorem~\ref{thm:construction:3d}, where we choose $\alpha_3 = 1 - \alpha_1$. The following lemma allows to further generalize this construction. It gives the necessary information about which points of the Kronecker lie closest to the origin.
\begin{lemma} \label{lem:asmallest} Let $\alpha \in \mathbb{R}\setminus \mathbb{Q}$ have continued fraction expansion $[0,a_1,a_2,\ldots]$ and convergents $(p_i,q_i)_{i \in \mathbb{N}}$. Now consider the subset $\left\{ n \alpha \right\}_{n=q_i}^{q_{i+1}}$ of the Kronecker sequence. Then the $2a_{i+1}$ points closest to the origin are given by the following inequality
\begin{align*} 
\norm{\left\{ q_{i+1} \alpha \right\}} < \norm{\left\{ 1 \cdot q_{i} \alpha \right\}} & < \norm{\left\{ (q_{i+1} - 1 \cdot q_{i}) \alpha \right\}} < \norm{\left\{ 2 \cdot q_{i} \alpha \right\}}\\ & < \ldots < \norm{\left\{ (q_{i+1} - a_{i+1} q_{i}) \alpha \right\}} < \norm{\left\{ (a_{1+1} \cdot q_{i}) \alpha \right\}},
\end{align*}
where $\norm{\cdot}$ as usual denotes the one-dimensional torus norm.
\end{lemma}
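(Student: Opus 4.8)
\emph{Plan.} Throughout I abbreviate $\theta_j := q_j\alpha - p_j$ and $D_j := \norm{q_j\alpha} = |\theta_j|$, and I will use only the following elementary continued–fraction facts: $\theta_j = (-1)^j D_j$; the sequence $(D_j)$ is strictly decreasing with $D_j < 1/2$ for every $j \ge 0$; the recursion $\theta_{j+1} = a_{j+1}\theta_j + \theta_{j-1}$, equivalently $D_{j-1} = a_{j+1}D_j + D_{j+1}$; the best–approximation property that $\norm{m\alpha} \ge D_j$ for $1 \le m < q_{j+1}$, with equality only for $m = q_j$; and that for positive integers $\norm{m\alpha} = \norm{m'\alpha}$ forces $m = m'$ (irrationality of $\alpha$). (Incidentally, the penultimate entry of the displayed chain should read $\norm{(q_{i+1}-(a_{i+1}-1)q_i)\alpha}$ and the last entry $\norm{a_{i+1}q_i\alpha}$; with that reading the inequalities are strict and consistent, and this is the version I prove.)

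\emph{Computing the $2a_{i+1}$ norms and their order.} For $1 \le k \le a_{i+1}$, reducing $kq_i\alpha$ modulo $1$ via $kq_i\alpha - kp_i = k\theta_i$ gives $\norm{kq_i\alpha} = kD_i$, because $kD_i \le a_{i+1}D_i = D_{i-1}-D_{i+1} < D_{i-1} \le D_0 < 1/2$. Similarly, for $0 \le j \le a_{i+1}-1$ one has $(q_{i+1}-jq_i)\alpha - (p_{i+1}-jp_i) = \theta_{i+1}-j\theta_i$, which, since $\theta_{i+1}$ and $\theta_i$ have opposite signs, has absolute value $D_{i+1}+jD_i \le D_{i-1} < 1/2$, so $\norm{(q_{i+1}-jq_i)\alpha} = D_{i+1}+jD_i$. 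The $2a_{i+1}$ indices obtained lie in $\{q_i,\dots,q_{i+1}\}$ and are pairwise distinct, the first batch being $\equiv 0$ and the second $\equiv q_{i-1} \not\equiv 0 \pmod{q_i}$. Since $0 < D_{i+1} < D_i$, the two arithmetic progressions $\{kD_i\}$ and $\{D_{i+1}+jD_i\}$ interleave as $kD_i < kD_i + D_{i+1} < (k+1)D_i$, which is exactly the claimed chain of strict inequalities.

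\emph{These are the $2a_{i+1}$ closest points; the main step.} It remains to show $\norm{n\alpha} > a_{i+1}D_i$ for every $n \in \{q_i,\dots,q_{i+1}\}$ outside the list. I would argue via the configuration $\{n\alpha \bmod 1 : 0 \le n \le q_{i+1}\}$ on the circle. Since $q_{i+1}$ is a continued–fraction denominator, $D_{i+1} = \min_{1\le n\le q_{i+1}}\norm{n\alpha}$; hence the gap between $0$ and $q_{i+1}\alpha$ has length exactly $D_{i+1}$, and — crucially — the best–approximation property alone (no need for the full three–gap theorem) shows that every other gap of this configuration has length $\ge D_i$: a shorter one would yield some $\norm{(m-m')\alpha} < D_i$ with $1 \le |m-m'| \le q_{i+1}$, which only $|m-m'| = q_{i+1}$ permits, i.e.\ that gap is the one between $0$ and $q_{i+1}\alpha$. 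Walking outward from $0$ on the side of $q_i\alpha$ (positions near $0^+$ when $i$ is even; the other parity is the mirror image), one therefore meets, one gap of length $D_i$ at a time, the consecutive points $q_i\alpha, 2q_i\alpha, \dots, a_{i+1}q_i\alpha$ at distances $D_i,2D_i,\dots,a_{i+1}D_i$, after which the next point is at distance $\ge (a_{i+1}+1)D_i$. Walking outward on the other side, across the short gap and then gaps of length $\ge D_i$, one meets $q_{i+1}\alpha, (q_{i+1}-q_i)\alpha, \dots, (q_{i+1}-(a_{i+1}-1)q_i)\alpha$ at distances $D_{i+1}, D_{i+1}+D_i, \dots, D_{i+1}+(a_{i+1}-1)D_i$, after which the next point is at distance $\ge D_{i+1}+(a_{i+1}-1)D_i+D_i = D_{i-1}$. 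As $D_{i-1} > a_{i+1}D_i$ and $(a_{i+1}+1)D_i > a_{i+1}D_i$, the $2a_{i+1}$ points just listed are precisely those lying within distance $a_{i+1}D_i$ of the origin, which gives the lemma.

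\emph{The point requiring care.} The delicate part is the identification of the cyclic order in the previous paragraph: one must be sure that no point sneaks in between consecutive $kq_i\alpha$ and $(k+1)q_i\alpha$ (or between the mirror points), and that the one anomalously short gap is indeed adjacent to the origin and not hidden inside one of the two clusters. Both follow from the ``every other gap $\ge D_i$'' fact together with the numerical bound $a_{i+1}D_i + D_{i+1} = D_{i-1} < 1/2$, which keeps both clusters well inside a half–turn of the circle. It is worth stressing that the naive estimate $\norm{n\alpha} \ge \norm{r\alpha} - |k\theta_i|$ obtained by writing $n = kq_i + r$ becomes too lossy once $a_{i+1}\ge 3$, so the gap–structure argument, rather than a direct triangle inequality, is the route one is forced into; everything else is bookkeeping.
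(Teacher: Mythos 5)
Your argument is correct in substance, but it is worth saying how it relates to the paper: the paper does not really prove this lemma at all — the proof there is a one-line appeal to "the dynamics behind the Three Gap Theorem" with a reference — whereas you reconstruct exactly that dynamics from scratch. Your route (identify the unique short gap of length $D_{i+1}=\norm{q_{i+1}\alpha}$ adjacent to the origin via the best-approximation property, show every other gap of the configuration $\{n\alpha\}_{0\le n\le q_{i+1}}$ has length at least $D_i$, then walk outward from $0$ on both sides to read off the two interleaved clusters $kq_i\alpha$ and $(q_{i+1}-jq_i)\alpha$) is precisely the renormalization picture the paper is invoking, so the two approaches prove the same fact; the benefit of yours is that it is self-contained, uses nothing beyond the best-approximation property of the $q_i$, and in particular makes explicit why a naive triangle-inequality estimate from $n=kq_i+r$ is insufficient. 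You also correctly diagnose the indexing typo in the statement (the chain must end $\norm{(q_{i+1}-(a_{i+1}-1)q_i)\alpha}<\norm{a_{i+1}q_i\alpha}$, since $\norm{(q_{i+1}-a_{i+1}q_i)\alpha}=\norm{q_{i-1}\alpha}=D_{i-1}>a_{i+1}D_i$), which is a genuine correction, not a quibble. Two small caveats you should flag rather than assert: $D_0<1/2$ and the identity $D_0=\norm{q_0\alpha}$ fail when $a_1=1$, and when $q_i=1$ (i.e.\ $i=0$, or $i=1$ with $a_1=1$) the $2a_{i+1}$ listed indices are not pairwise distinct, so your distinctness and half-circle arguments need $i\ge 2$, or $i\ge 1$ together with the paper's standing hypothesis $a_j>1$; in the context in which the lemma is applied this costs nothing, but as stated your preliminary facts are slightly too strong.
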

\begin{proof} This is the dynamics behind Three Gap Theorem, compare e.g.~\cite{Wei20}. \end{proof}
Hence, the right hand side of \eqref{eq1} may be replaced by the condition $b q_{l-1}^2$ with $b \leq a_{k_{l+1}}^1$ and we still get a sequence with only one nearest neighbor distance for infinitely many $N_{1_i}$ by Lemma~\ref{lem:asmallest} if in addition $a_{k_{l+1}}^1 /2b > 2$ is satisfied.
\section*{Acknowledgments}
Parts of the research on this paper was conducted during a stay of the author at the Max-Planck Institute in Bonn whom he would like to thank for hospitality and an inspiring scientific atmosphere.
\bibliographystyle{alpha}
\bibdata{references}
\bibliography{references}
\end{document}